\documentclass[12pt]{amsart}

\voffset -10truemm

\pdfoutput=1
\usepackage{latexsym}
\usepackage[centertags]{amsmath}
\usepackage{amsfonts}
\usepackage{amssymb}
\usepackage{amsthm}
\usepackage{newlfont}
\usepackage{graphics}
\usepackage{color}
\usepackage{booktabs}
\usepackage{wrapfig}
\usepackage{subfigure}
\usepackage{graphicx}
\usepackage[usenames,dvipsnames]{xcolor}
\usepackage{graphicx}

\usepackage{wrapfig}
\usepackage[nocompress]{cite}
\makeatletter
\newcommand{\tabcaption}{\def\@captype{table}\caption}
\makeatother

\parskip 5pt
\parindent 2em
\newtheorem{theo}{Theorem}
\newtheorem{defn}[theo]{Definition}
\newtheorem{exam}[theo]{Example}

\newtheorem{cor}[theo]{Corollary}
\newtheorem{prop}[theo]{Proposition}

\newtheorem{algor}[theo]{Algorithm}

\newtheorem{prob}{Problem}

\makeatletter \@addtoreset{equation}{section}
\@addtoreset{theo}{}\makeatother

\setlength{\topmargin}{-5mm} \setlength{\oddsidemargin}{0.2cm}
\setlength{\evensidemargin}{0.2cm} \setlength{\textwidth}{15.8cm}
\setlength{\textheight}{22.42cm}

\numberwithin{equation}{section}

\def\CT{\mathop{\mathrm{CT}}}
\textheight=24cm

\def\N{\mathbb{N}}
\def\Z{\mathbb{Z}}




\def\N{\mathbb{N}}
\def\Z{\mathbb{Z}}

\title{A polynomial time algorithm for calculating Fourier-Dedekind sums}

\author{Guoce Xin$^{1,*}$, Xinyu Xu$^{2}$}

 \address{ $^{1,2}$School of Mathematical Sciences, Capital Normal University,
 Beijing 100048, PR China}
\email{$^1$\texttt{guoce\_xin@163.com}\ \& $^2$\texttt{xinyu0510x@163.com}}

\date{ \today}
\begin{document}
\begin{abstract}
We solve an open problem proposed in the book ``Computing the continuous discretely" written by Matthias Beck and Sinai Robins. That is,
we proposed a polynomial time algorithm for calculating Fourier-Dedekind sums. The algorithm
 is simple modular Barvinok's simplicial cone decomposition. It can be easily adapted into
 De Leora et. al.'s LattE package, which gives a nice implimentation of Barvinok's polynomial time algorithm.
\end{abstract}

\maketitle
\noindent
\begin{small}
 \emph{Mathematic subject classification}: Primary 11F20; Secondary 11Y16, 05A15, 11L03.
\end{small}

\noindent
\begin{small}
\emph{Keywords}: Dedekind sum; Fourier-Dedekind sum; Barvinok's algorithm; constant terms.
\end{small}

\section{Introduction}
This draft is an announcement. A complete version will be finished soon.

Dedekind sums are important number-theoretical objects that arise in many areas of mathematics, 
including number theory, geometry, topology, algorithmic complexity, etc. See, e.g., \cite{beck2007computing} for details and further references.
Fourier-Dedekind sums unify many variations of the Dedekind sums that have appeared in the literature, and form the building blocks of Ehrhart quasipolynomials. 
The Fourier-Dedekind sum is defined by
\begin{align}\label{FD-defn}
 s_n(a_1,a_2,\dots,a_d;b)=\frac{1}{b}\sum\limits_{k=1}^{b-1}\frac{\xi^{kn}}{(1-\xi_b^{ka_1})\cdot(1-\xi_b^{ka_2})\cdots(1-\xi_b^{ka_d})},
\end{align}
where $a_1, a_2,\dots, a_d, b\in\N$ and $b>1$ is relatively prime to each $a_i$ and $\xi_b=e^{\frac{2\pi i}{b}}$.

The following open problem about Fourier-Dedekind sum was proposed by Matthias Beck and Sinai Robins in \cite{beck2007computing}.
\begin{prob}[Open Problem]
 It is known \cite{Beck2003} that the Fourier-Dedekind sums are efficiently computable. Find a fast algorithm that can be implemented in practice.
\end{prob}

We solve this open problem by giving a desired polynomial time algorithm using a constant term concept in \cite{xin2015euclid} and a simple application of Barvinok's algorithm. The algorithm can be easily adapted into the package \texttt{LattE} by De Loera et al. \cite{LattE}.

\section{The polynomial time algorithm}
Throughout this section,, we assume that $a_1, a_2,\dots, a_d\in \N$ are coprime to $b$ unless specified otherwise.
\subsection{A brief introduction}
Here we need to write an Elliott rational function $E$ in the following form.
\begin{align}
  \label{e-E-xform}
E= \frac{L(\lambda)}{\prod_{i=1}^n (1-u_i \lambda^{a_i})}
\end{align}
where $L(\lambda)$ is a Laurent polynomial, $u_i$ are free of $\lambda$ and $a_i$ are positive integers for all $i$. The algorithm mainly relies on the following known results.
\begin{prop}\label{p-partialfraction}
Suppose the partial fraction decomposition of $E$ is given by
\begin{align}
  \label{e-E-parfrac}
E= P(\lambda)+ \frac{p(\lambda)}{\lambda^k} +\sum_{i=1}^n \frac{A_i(\lambda)}{1-u_i \lambda^{a_i}},
\end{align}
where the $u_i$'s are free of $\lambda$, $P(\lambda),p(\lambda),$ and the $ A_i(\lambda)$'s are all polynomials, $\deg p(\lambda)<k$, and
$\deg A_i(\lambda)<a_i$ for all $i$.
Then we have
$$\CT_\lambda E = P(0) + \sum_{u_i \lambda^{a_i} <1} A_i(0).$$
\end{prop}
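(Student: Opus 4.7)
The plan is to exploit the linearity of the constant term operator $\CT_\lambda$, apply it termwise to the four kinds of summands in \eqref{e-E-parfrac}, and use the degree constraints on $p(\lambda)$ and the $A_i(\lambda)$ to isolate exactly one nontrivial contribution from each piece. The whole point of the hypothesis ``$\deg p<k$'' and ``$\deg A_i<a_i$'' is that these bounds force the Laurent series expansions to have no accidental overlap with the constant term, so the computation reduces to reading off $A_i(0)$.

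First I would handle the easy pieces. The polynomial $P(\lambda)$ contributes $\CT_\lambda P(\lambda)=P(0)$ by definition. The principal part $p(\lambda)/\lambda^k$ with $\deg p<k$ is a Laurent polynomial supported on the exponents $\{-k,-k+1,\ldots,-1\}$, hence contributes $0$ to the constant term. So everything reduces to computing $\CT_\lambda\bigl(A_i(\lambda)/(1-u_i\lambda^{a_i})\bigr)$ for each $i$, and the claim is that this equals $A_i(0)$ precisely when the convention $u_i\lambda^{a_i}<1$ is in force, and equals $0$ otherwise.

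Next I would split on the chosen expansion direction for each geometric factor. In the region $u_i\lambda^{a_i}<1$, expand
\[
\frac{A_i(\lambda)}{1-u_i\lambda^{a_i}}=\sum_{j\ge 0}u_i^{\,j}\,A_i(\lambda)\,\lambda^{ja_i}.
\]
Because $a_i\ge 1$ and $\deg A_i<a_i$, the $j$-th summand is supported on exponents in $[ja_i,\,ja_i+a_i-1]$; these intervals are disjoint and only the $j=0$ block can possibly contain the exponent $0$. The constant term is therefore $A_i(0)$. In the opposite region $u_i\lambda^{a_i}>1$, rewrite
\[
\frac{A_i(\lambda)}{1-u_i\lambda^{a_i}}=-\sum_{j\ge 1}u_i^{-j}\,A_i(\lambda)\,\lambda^{-ja_i},
\]
so the $j$-th summand is supported on exponents in $[-ja_i,\,-ja_i+a_i-1]\subseteq[-\infty,-1]$; no constant term appears. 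Summing the contributions of every $i$ and adding $P(0)$ from the polynomial piece gives the stated formula.

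The only point that requires care, and which I expect to be the main (though mild) obstacle, is being precise about the convention behind $\CT_\lambda$: the notation ``$u_i\lambda^{a_i}<1$'' must be interpreted as picking a specific iterated Laurent series expansion (as in \cite{xin2015euclid}), so that the two geometric expansions above are legitimate within the chosen series ring and the operator $\CT_\lambda$ respects linearity when applied to formally infinite sums. Once that formal framework is spelled out, the exponent-counting above immediately yields the conclusion.
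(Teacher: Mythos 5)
Your argument is correct and is exactly the standard proof of this result (the paper states it without proof, citing it as known from \cite{xin2015euclid}, where this degree-counting argument appears): linearity of $\CT_\lambda$, the vanishing of the principal part since $\deg p<k$ forces only negative exponents, and the observation that $\deg A_i<a_i$ confines the $j$-th block of either geometric expansion to the exponent range $[ja_i,\,ja_i+a_i-1]$ (resp.\ $[-ja_i,\,-ja_i+a_i-1]$), so only $j=0$ in the ``small'' case meets exponent $0$. Your closing caveat about interpreting $u_i\lambda^{a_i}<1$ via a fixed field of iterated Laurent series is precisely the right framework and is what makes the termwise expansion legitimate.
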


\begin{defn}
We denote
$$\CT_\lambda \frac{1}{\underline{1-u_s \lambda^{a_s}}} E (1-u_s \lambda^{a_s}):=A_s(0).$$
In the general case, for any $\varnothing \neq I\subseteq [n]$ , we denote
$$\CT_\lambda \frac{1}{\underline{\prod\limits_{s\in I}(1-u_s \lambda^{a_s})}} E\cdot \prod\limits_{s\in I}(1-u_s \lambda^{a_s}):=\sum\limits_{s\in I}A_s(0).$$
\end{defn}

Our algorithm is based on the following observation.
\begin{prop}\label{com-F}
Suppose $F(\lambda)$ is a rational function and $F(\xi_b^k)$ exists for $k=1,2,\dots, b$, where $\xi_b=e^{\frac{2\pi i}{b}}$. Then we have
\begin{align*}
 \frac{1}{b}\sum\limits_{k=1}^{b-1}F(\xi_b^k)=\CT\limits_\lambda \frac{1}{\underline{1-\lambda^b}}F(\lambda)-\frac{1}{b}F(1).
\end{align*}
\end{prop}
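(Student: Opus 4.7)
The plan is to compute the partial-fraction block of $E:=\frac{F(\lambda)}{1-\lambda^b}$ attached to the underlined factor $1-\lambda^b$; by the definition immediately preceding the proposition, this block, evaluated at $\lambda=0$, is exactly $\CT_\lambda\frac{1}{\underline{1-\lambda^b}}F(\lambda)$.

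First I would record the elementary partial-fraction identity
$$\frac{1}{1-\lambda^b}=\frac{1}{b}\sum_{k=0}^{b-1}\frac{1}{1-\xi_b^k\lambda},$$
which one verifies by comparing simple-pole residues at each $\lambda=\xi_b^{-m}$: on the left, $\frac{d}{d\lambda}(1-\lambda^b)|_{\lambda=\xi_b^{-m}}=-b\,\xi_b^m$; on the right only the $k=m$ summand contributes, with $\frac{d}{d\lambda}(1-\xi_b^m\lambda)|_{\lambda=\xi_b^{-m}}=-\xi_b^m$, and the factor $1/b$ makes the two residues agree. Multiplying by $F(\lambda)$ and using the hypothesis that $F$ is regular at every $\xi_b^{-k}$, each summand $\frac{F(\lambda)}{1-\xi_b^k\lambda}$ has principal part $\frac{F(\xi_b^{-k})}{1-\xi_b^k\lambda}$ at its pole $\xi_b^{-k}$ and is holomorphic there otherwise. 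Collecting these principal parts gives
$$\frac{A(\lambda)}{1-\lambda^b}:=\frac{1}{b}\sum_{k=0}^{b-1}\frac{F(\xi_b^{-k})}{1-\xi_b^k\lambda},$$
and this is precisely the block attached to $1-\lambda^b$: subtracting it from $E$ kills every pole at a $b$-th root of unity, leaving a rational function whose remaining poles are those of $F$ itself and therefore feed only into other blocks.

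Next I would clear denominators using $1-\lambda^b=\prod_{j=0}^{b-1}(1-\xi_b^j\lambda)$ to obtain
$$A(\lambda)=\frac{1}{b}\sum_{k=0}^{b-1}F(\xi_b^{-k})\prod_{j\neq k}(1-\xi_b^j\lambda),$$
a polynomial of degree at most $b-1$ (meeting the bound $\deg A<b$ demanded by Proposition~\ref{p-partialfraction}). Setting $\lambda=0$ collapses every product to $1$, so $A(0)=\frac{1}{b}\sum_{k=0}^{b-1}F(\xi_b^{-k})$. Splitting off the $k=0$ term and re-indexing $k\mapsto b-k$ in the rest (using $\xi_b^{-k}=\xi_b^{b-k}$) yields
$$A(0)=\frac{1}{b}F(1)+\frac{1}{b}\sum_{k=1}^{b-1}F(\xi_b^k),$$
and rearranging produces the claimed identity.

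The main obstacle is the bookkeeping that identifies $\frac{1}{b}\sum_{k}\frac{F(\xi_b^{-k})}{1-\xi_b^k\lambda}$ with the $A(\lambda)/(1-\lambda^b)$ block in the sense of the underlined-constant-term definition, rather than merely with the sum of the simple-pole partial fractions at each $\xi_b^{-k}$ separately. This requires checking both the degree bound and that the difference $E-A/(1-\lambda^b)$ has no pole at any $b$-th root of unity; both reduce to the residue comparison above. The sign/indexing swap $\xi_b^{-k}\leftrightarrow\xi_b^k$ at the end is elementary but easy to mishandle, so I would write it out explicitly.
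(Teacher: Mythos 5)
Your proof is correct and takes essentially the same route as the paper: the paper's entire argument is to quote as ``known'' the identity $\CT_\lambda \frac{1}{\underline{1-\lambda^b}}F(\lambda)=\frac{1}{b}\sum_{k=1}^{b-1}F(\xi_b^k)+\frac{1}{b}F(1)$ (obtained by factoring $1-\lambda^b=\prod_{j=0}^{b-1}(1-\xi_b^{-j}\lambda)$ and summing the contributions $A_s(0)=\frac{1}{b}F(\xi_b^{j})$ of the linear factors), which is exactly the residue computation you carry out in detail. The only cosmetic difference is that you keep $1-\lambda^b$ as a single underlined block and compute its numerator $A(\lambda)$ with the degree bound, rather than splitting it into $b$ underlined linear factors and invoking the general-$I$ form of the definition.
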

\begin{proof}
The proposition follows by the following known identity.
 \begin{equation*}
\CT\limits_\lambda\frac{1}{\underline{1-\lambda^b}}F(\lambda)
=\CT\limits_\lambda\frac{1}{\underline{(1-\lambda)\cdot(1-\xi^{-1}_b\lambda)\cdots(1-\xi^{1-b}_b\lambda)}}F(\lambda)
=\frac{1}{b}\sum\limits_{k=1}^{b-1}F(\xi_b^k)+\frac{1}{b}F(1).
\end{equation*}
\end{proof}
\begin{cor}
Let $d\geq1$. Then $s_n(a_1,a_2,\dots,a_d;b)$ can be written as
\begin{align*}
  s_n(a_1,a_2,\dots,a_d;b)=\Big(Q_z-\frac{1}{b(1-z_1)\cdots(1-z_d)}\Big)\Big|_{z_i=1}.
\end{align*}
where $Q_z=\CT\limits_\lambda\frac{\lambda^n}{\underline{(1-\lambda^b)}\cdot(1-\lambda^{a_1}z_1)\cdots(1-\lambda^{a_d}z_d)}$.
\end{cor}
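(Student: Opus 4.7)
The plan is to apply Proposition \ref{com-F} to a parametric rational function in $\lambda$ in which the $z_i$ play the role of formal parameters, and then specialize $z_i\to 1$ at the end.

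Specifically, I would introduce
\[
F(\lambda)=\frac{\lambda^n}{(1-\lambda^{a_1}z_1)\cdots(1-\lambda^{a_d}z_d)},
\]
viewed as a rational function of $\lambda$ with coefficients in $\mathbb{Q}(z_1,\dots,z_d)$. For each $k=1,2,\dots,b-1$, the value $F(\xi_b^k)$ is a well-defined rational function of $z$, and $F(1)=1/\prod_{i}(1-z_i)$. Hence Proposition \ref{com-F} applies verbatim in this parametric setting and yields the identity of rational functions
\[
\frac{1}{b}\sum_{k=1}^{b-1}\frac{\xi_b^{kn}}{\prod_{i=1}^d(1-\xi_b^{ka_i}z_i)}
\;=\;Q_z-\frac{1}{b(1-z_1)\cdots(1-z_d)}.
\]

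Next I would specialize $z_1=\cdots=z_d=1$. On the left-hand side, each summand is individually regular at $z=1$ because the hypothesis $\gcd(a_i,b)=1$ guarantees $\xi_b^{ka_i}\neq 1$ for $1\le k\le b-1$; thus the left-hand side evaluates to $s_n(a_1,\ldots,a_d;b)$ by the definition \eqref{FD-defn}. On the right-hand side, the individual poles of $Q_z$ and of $\frac{1}{b(1-z_1)\cdots(1-z_d)}$ at $z_i=1$ must cancel, and the value of the difference at $z_i=1$ is well-defined and equals the left-hand side. This gives the claimed formula.

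The conceptually delicate point — and where I expect the main care to be required — is justifying that Proposition \ref{com-F} carries over when the coefficients of $F$ live in $\mathbb{Q}(z_1,\dots,z_d)$ rather than $\mathbb{C}$, i.e., that the underlined constant term operation $\CT_\lambda\frac{1}{\underline{1-\lambda^b}}$ commutes with treating the $z_i$ as formal parameters. This follows because the partial fraction decomposition with respect to $\lambda$ can be performed over $\mathbb{Q}(z_1,\ldots,z_d)$, and the factor $1-\lambda^b$ is coprime (as a polynomial in $\lambda$) to each $1-\lambda^{a_i}z_i$ over that field; consequently the underlined contribution $Q_z$ is a genuine element of $\mathbb{Q}(z_1,\ldots,z_d)$ and the identity above is an identity in that field. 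Once this parametric extension is in hand, the rest of the argument is simply a substitution, and the cancellation of the pole at $z=1$ is automatic because the left-hand side is finite there.
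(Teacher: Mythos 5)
Your proposal is correct and follows essentially the same route as the paper: apply Proposition \ref{com-F} to $F(\lambda)=\lambda^n/\prod_i(1-\lambda^{a_i}z_i)$ and then take the limit as $z_i\to 1$. The extra care you take in justifying the parametric use of the proposition and the regularity of the left-hand side at $z=1$ is a welcome elaboration, but it does not change the argument.
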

\begin{proof}
Apply Proposition \ref{com-F} to $F(\lambda)=\frac{\lambda^n}{(1-\lambda^{a_1}z_1)\cdots(1-\lambda^{a_d}z_d)}.$  We obtain
$$\frac{1}{b}\sum\limits_{k=1}^{b-1}\frac{\xi_b^{kn}}{(1-\xi_b^{ka_1}z_1)\cdots(1-\xi_b^{ka_d}z_d)}=\CT\limits_\lambda\frac{1}{\underline{1-\lambda^b}}
F(\lambda)-\frac{1}{b(1-z_1)\cdots(1-z_d)}.$$
Taking limits at $z_i=1$ for all $i$ gives
$$s_n(a_1,a_2,\dots,a_d;b)=\Big(\CT\limits_\lambda\frac{\lambda^n}{\underline{(1-\lambda^b)}\cdot(1-\lambda^{a_1}z_1)\cdots(1-\lambda^{a_d}z_d)}-\frac{1}{b(1-z_1)\cdots(1-z_d)}\Big)\Big|_{z_i=1},$$
as desired.
\end{proof}
\subsection{Steps of the algorithm}
We use the package \texttt{LattE} to compute $s_n(a_1,a_2,\dots,a_d;b)$.

First, by adding a slack variable $z_0$
we can write $$Q_z=\Big(\CT\limits_\lambda\frac{\lambda^n}{\underline{(1-\lambda^bz_0)}\cdot(1-\lambda^{a_1}z_1)\cdots(1-\lambda^{a_d}z_d)}\Big)\Big|_{z_0=1}.$$
For convenience, we let $$\widetilde{Q_z}:=\CT\limits_\lambda\frac{\lambda^n}{\underline{(1-\lambda^bz_0)}\cdot(1-\lambda^{a_1}z_1)\cdots(1-\lambda^{a_d}z_d)}.$$
Observe that
$$\widetilde{Q_z}=\sum\limits_{\alpha\in P\cap \Z^{d+1}}z^{\alpha}$$
enumerate lattice points in the vertex simplicial cone $P$ defined by the
 vertex $v=(-\frac{n}{b},0,\dots,0)^t$ and generators the column vectors of \begin{equation*}
   H=\left(
     \begin{array}{ccccc}
       -a_1 &-a_2    &\ldots &-a_d \\
         b  &  0     &\ldots & 0   \\
         0  &  b     &\ldots & 0    \\
     \vdots &\vdots  &\ldots &\vdots\\
       0    &  0     &\ldots & b    \\
     \end{array}
   \right).
 \end{equation*}
Then we use \texttt{LattE} to write
$$\widetilde{Q_z}=\sum\limits_{i}\widetilde{Q_i}(z_0,z_1,\cdots,z_d)$$
as a short sum of simple rational functions, and compute the limit
$$\Big(\sum\limits_{i}\widetilde{Q_i}(z_0,z_1,\cdots,z_d) -\frac{1}{b(1-z_1)\cdots(1-z_d)} \Big)\Big|_{z_j=1}.$$
This is equal to the desired $s_n(a_1,a_2,\dots,a_d;b)$.

\begin{algor}
Now we will give an algorithm for computing the Fourier-Dedekind sum $s_n(a_1,a_2,\dots,a_d;b)$.
 \begin{itemize}
   \item[1.] Add slack variable $z_0$ to $Q_z$ and get $\widetilde{Q_z}=\CT\limits_\lambda\frac{\lambda^n}{\underline{(1-\lambda^bz_0)}\cdot(1-\lambda^{a_1}z_1)\cdots(1-\lambda^{a_d}z_d)}$.
   \item[2.] We can write $\widetilde{Q_z}=\sum\limits_i\widetilde{Q_i}(z_0,\dots,z_d)$ by the \texttt{LattE} package.
   \item[3.]Eliminate slack variables $z_j$ by using either \texttt{LattE} or \texttt{CTEuclid} to give the output.
 \end{itemize}
\end{algor}

We illustrate the basic idea by using the (elementary) CTEuclid algorithm for a replacement of Step 2.3.
\begin{exam}
Compute $s_4(4,3,5;7)$.

By definition of Fourier-Dedekind sum, we have $s_4(4,3,5;7)=\frac{1}{7}\sum\limits_{k=1}^{6}\frac{\xi^{4k}}{(1-\xi_7^{4k})(1-\xi_7^{5k})(1-\xi_7^{3k})},$
where $\xi_7=e^{\frac{2\pi i}{7}}$.
\begin{align*}
Q_z&=\CT\limits_\lambda\frac{\lambda^4}{\underline{(1-\lambda^7)}\cdot(1-\lambda^4z_1)(1-\lambda^5z_2)(1-\lambda^3z_3)}\\
&={\frac {{z_{{1}}}^{9}}{ \left( {z_{{1}}}^{3}-z_{{2}} \right)  \left( z
_{{1}}z_{{3}}-1 \right)  \left( {z_{{1}}}^{7}-1 \right) }}-{\frac {z_{
{3}}}{ \left( z_{{1}}z_{{3}}-1 \right)  \left( z_{{2}}{z_{{3}}}^{3}-1
 \right)  \left( {z_{{3}}}^{7}-1 \right) }}\\
&-{\frac {{z_{{1}}}^{3}{z_{{
2}}}^{2}}{ \left( z_{{1}}z_{{3}}-1 \right)  \left( z_{{1}}{z_{{2}}}^{2
}-1 \right)  \left( {z_{{1}}}^{3}-z_{{2}} \right) }}-{\frac {{z_{{2}}}
^{2}z_{{3}}}{ \left( z_{{1}}z_{{3}}-1 \right)  \left( {z_{{2}}}^{2}-z_
{{3}} \right)  \left( z_{{2}}{z_{{3}}}^{3}-1 \right) }}\\
&+{\frac {{z_{{2
}}}^{4}}{ \left( z_{{1}}{z_{{2}}}^{2}-1 \right)  \left( {z_{{2}}}^{2}-
z_{{3}} \right)  \left( {z_{{2}}}^{7}-1 \right) }}.
\end{align*}
Then
\begin{align*}
s_4(4,3,5;7)&=\Big(Q_z-\frac{1}{(1-z_1)(1-z_2)(1-z_3)}\Big)\Big|_{z_i=1}\\
&=\Big({\frac {{z_{{1}}}^{9}}{ \left( {z_{{1}}}^{3}-z_{{2}} \right)  \left( z
_{{1}}z_{{3}}-1 \right)  \left( {z_{{1}}}^{7}-1 \right) }}-{\frac {z_{
{3}}}{ \left( z_{{1}}z_{{3}}-1 \right)  \left( z_{{2}}{z_{{3}}}^{3}-1
 \right)  \left( {z_{{3}}}^{7}-1 \right) }}\\
&-{\frac {{z_{{1}}}^{3}{z_{{
2}}}^{2}}{ \left( z_{{1}}z_{{3}}-1 \right)  \left( z_{{1}}{z_{{2}}}^{2
}-1 \right)  \left( {z_{{1}}}^{3}-z_{{2}} \right) }}-{\frac {{z_{{2}}}
^{2}z_{{3}}}{ \left( z_{{1}}z_{{3}}-1 \right)  \left( {z_{{2}}}^{2}-z_
{{3}} \right)  \left( z_{{2}}{z_{{3}}}^{3}-1 \right) }}\\
&+{\frac {{z_{{2
}}}^{4}}{ \left( z_{{1}}{z_{{2}}}^{2}-1 \right)  \left( {z_{{2}}}^{2}-
z_{{3}} \right)  \left( {z_{{2}}}^{7}-1 \right) }}-\frac{1}{(1-z_1)(1-z_2)(1-z_3)}\Big)\Big|_{z_i=1}\\
&=\frac{1}{7}.
\end{align*}

\end{exam}

\section{Appendix: Computer Experiment}
We give data using a newly developed algorithm LLLCTEuclid, which is still under construction. This part will not be included in the final version.


\mbox{\includegraphics[width=\linewidth]{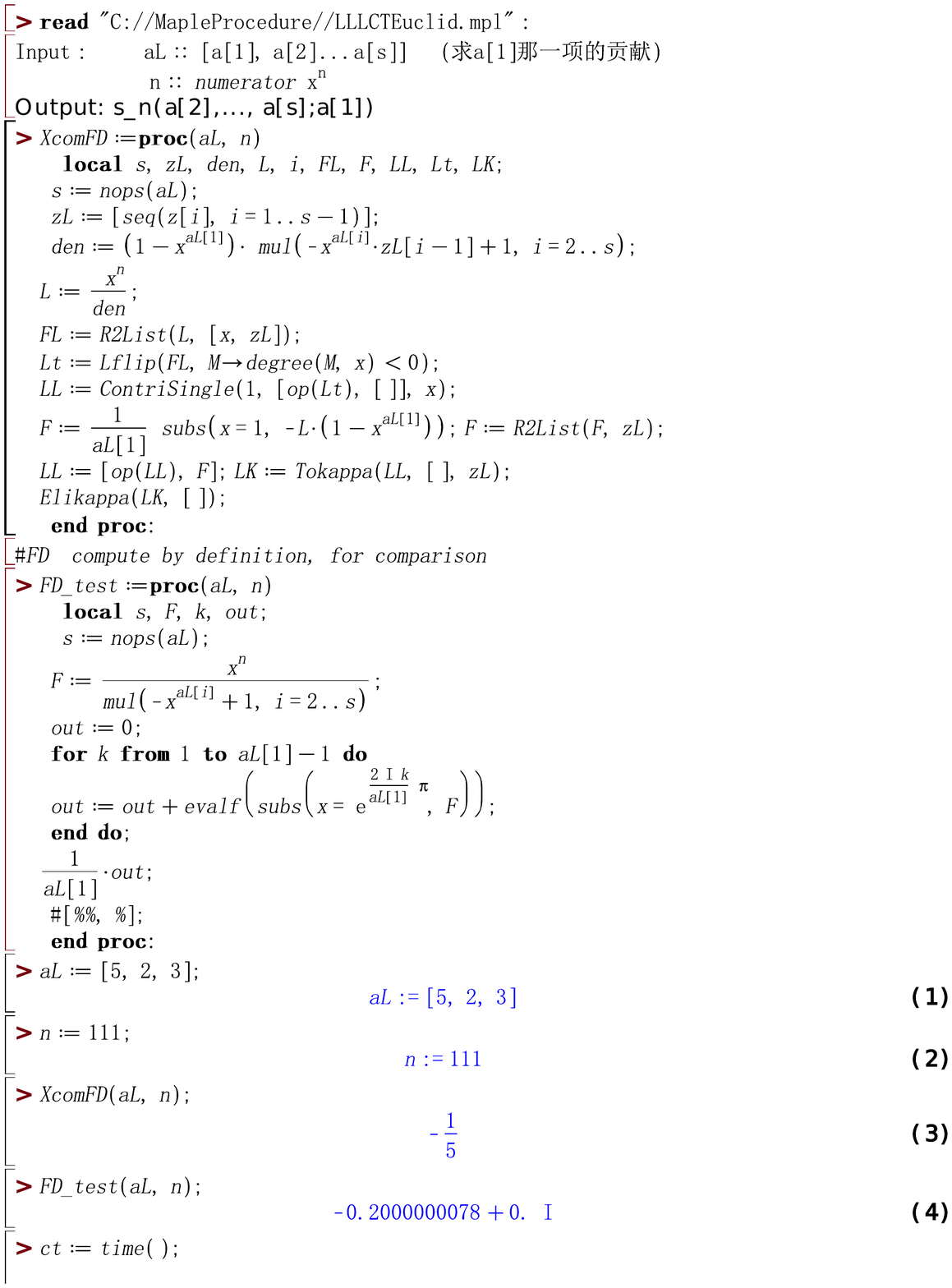}}

\mbox{\includegraphics[width=\linewidth]{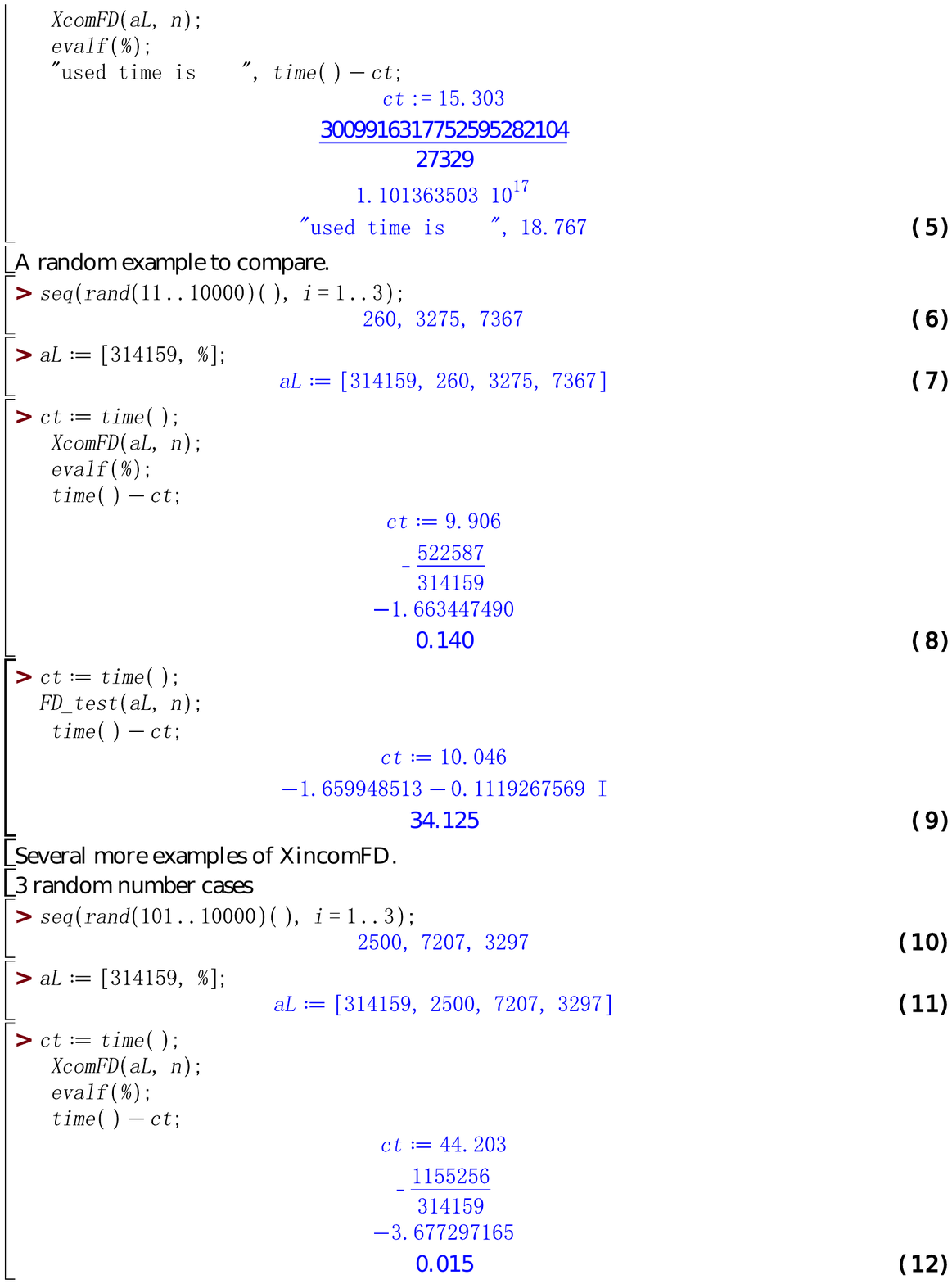}}

\mbox{\includegraphics[width=\linewidth]{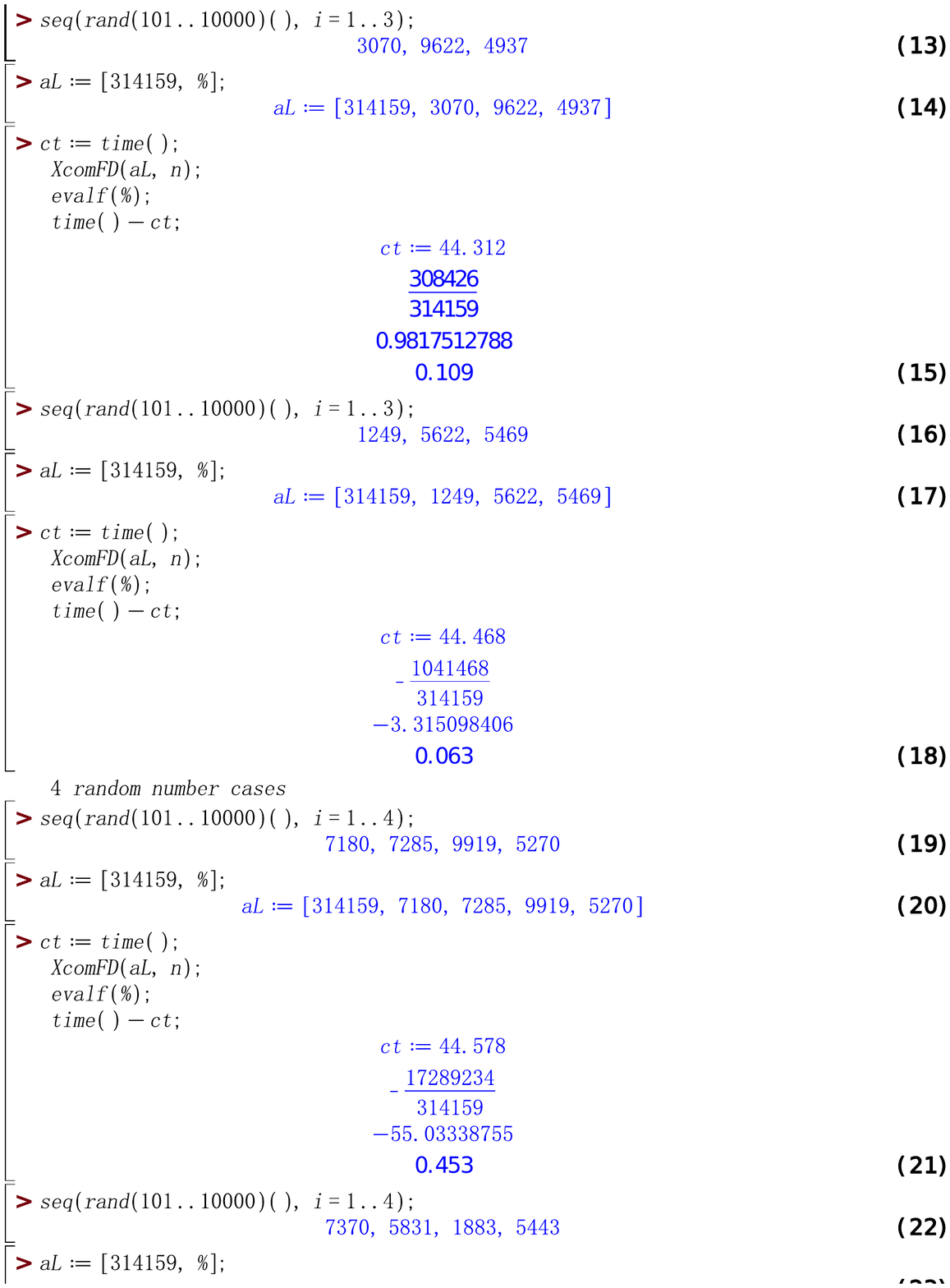}}

\mbox{\includegraphics[width=\linewidth]{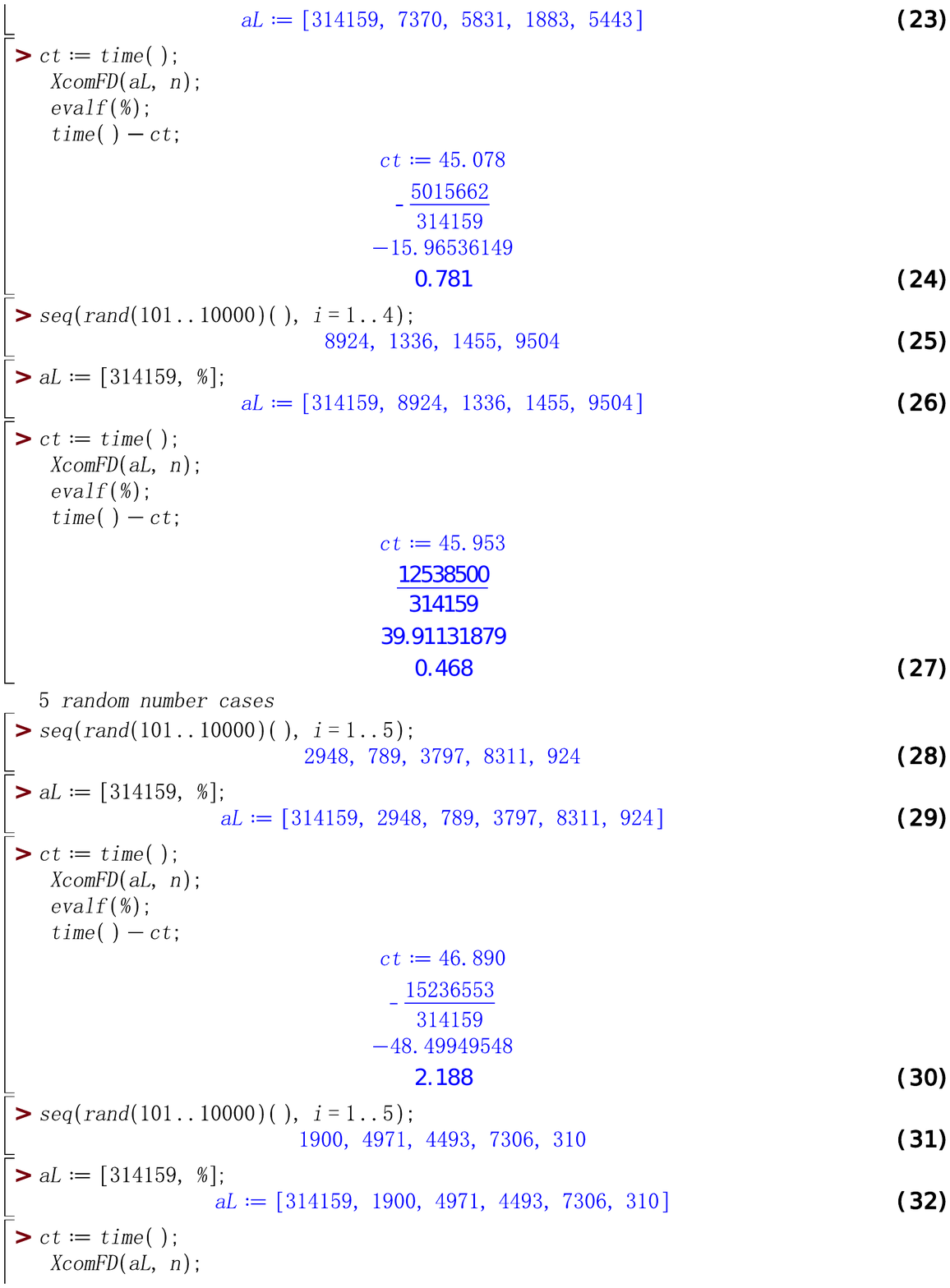}}

\mbox{\includegraphics[width=\linewidth]{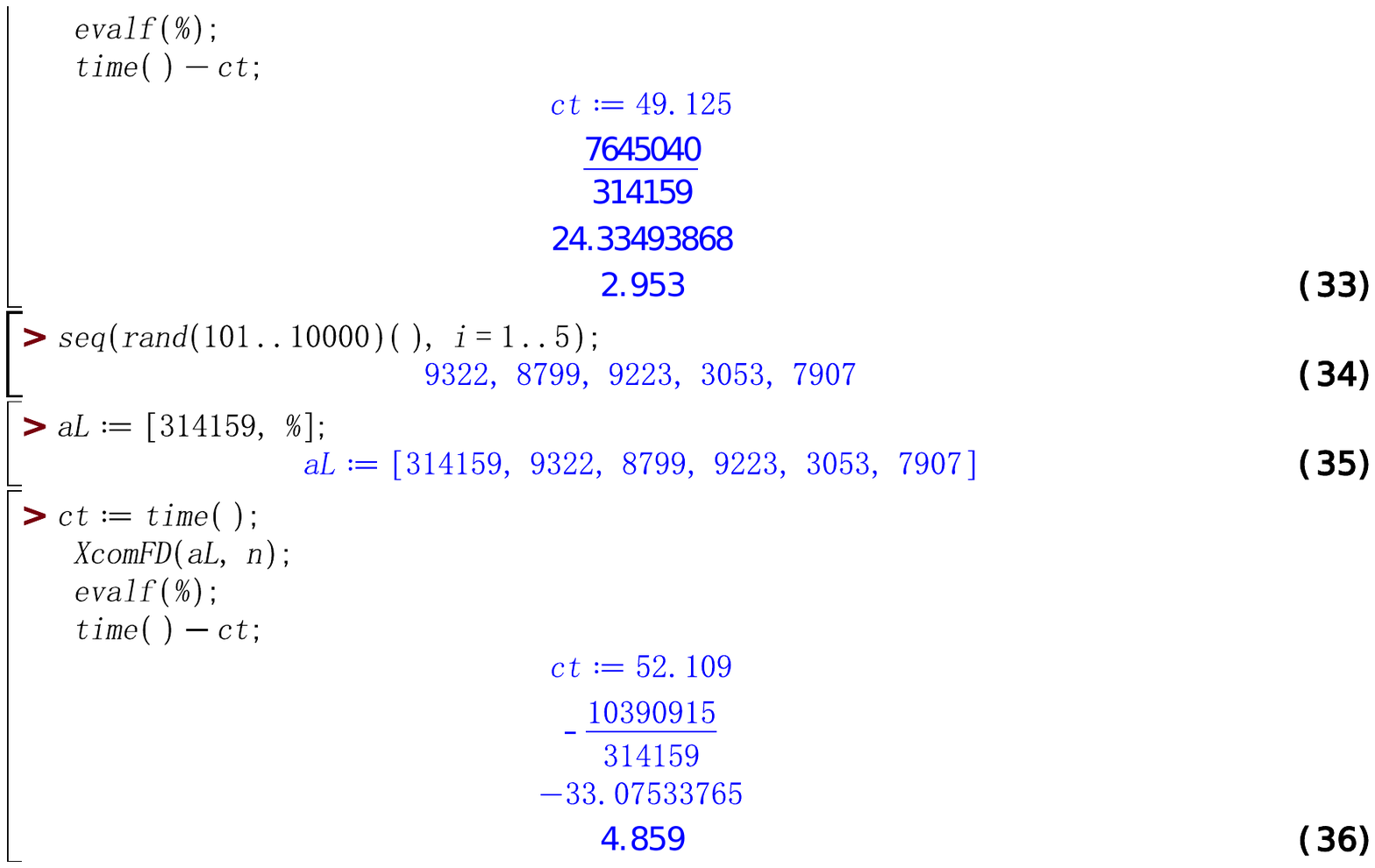}}

%
%
%
%
%
%

\end{document}